\documentclass[11pt]{article}
\usepackage{amsmath,amsthm}
\usepackage{amssymb,mathrsfs}
\usepackage{bm,mathtools}
\usepackage{graphics}
\usepackage{tikz}
\usepackage{float}
\usepackage{enumitem}
\usepackage{array}
\usetikzlibrary{calc}
\usepackage{xcolor}

\usepackage[normalem]{ulem}

\usepackage{geometry}
\usepackage[colorlinks=true,
linkcolor=blue,citecolor=blue,
urlcolor=blue]{hyperref}

\makeatletter
\def\@seccntDot{.}
\def\@seccntformat#1{\csname the#1\endcsname\@seccntDot\hskip 0.5em}
\renewcommand\section{\@startsection{section}{1}{\z@}%
	{18\p@ \@plus 6\p@ \@minus 3\p@}%
	{9\p@ \@plus 6\p@ \@minus 3\p@}%
	{\LARGE\bfseries\boldmath}}
\renewcommand\subsection{\@startsection{subsection}{2}{\z@}%
	{12\p@ \@plus 6\p@ \@minus 3\p@}%
	{3\p@ \@plus 6\p@ \@minus 3\p@}%
	{\bfseries\boldmath}}
\renewcommand\subsubsection{\@startsection{subsubsection}{3}{\z@}%
	{12\p@ \@plus 6\p@ \@minus 3\p@}%
	{\p@}%
	{\bfseries\boldmath}}
\makeatother

\renewenvironment{proof}{\noindent\textbf{Proof.}}{\hfill $\blacksquare$\par}

\usepackage{microtype}
\usepackage{float}

\theoremstyle{plain}

\newtheorem{theorem}{Theorem}

\newtheorem{lemma}[theorem]{Lemma}
\newtheorem{corollary}[theorem]{\rm\bfseries Corollary}

\newtheorem{observation}[theorem]{Observation}

\allowdisplaybreaks
\begin{document}
\title{The maximum number of cycles of a given length in a nonhamiltonian graph}
\author{
	Jifu Lin\thanks{Department of Mathematics, East China Normal University, Shanghai 200241, China, e-mail: {\tt jifulin01@163.com}.}
	\and 
	Xiaolin Wang\thanks{School of Mathematics and Statistics, Fuzhou University, Fuzhou 350108, China, e-mail: {\tt  xiaolinw@fzu.edu.cn}.}
	\and
	Lihua You\thanks{Corresponding author. School of Mathematical Sciences, South China Normal University, Guangzhou 510631, China, e-mail: {\tt ylhua@scnu.edu.cn}.}
}
\maketitle

\begin{abstract}
	In 2026, Li and Zhan characterized the nonhamiltonian graphs of order $n$ with the maximum number of paths of length $k$, where $n$ and $k$ are integers satisfying $1\leq k\leq n-1$. This work solves and generalizes a problem proposed by Erd\H{o}s in 1980. In this paper, we further determine the nonhamiltonian graphs of order $n$ attaining the maximum number of cycles of length $k$ for given integers $n$ and $k$ with $3\leq k\leq n-1$. As a corollary, we determine the generalized Turán number $\mathrm{ex}(n,C_k,C_n)$ for every $3\leq k\leq  n-1$.

\end{abstract}

{\bf Keywords.} Cycles; cycle lengths; nonhamiltonian graph; generalized Turán number; extremal graph

{\bf Mathematics Subject Classification.} 05C30, 05C35, 05C38
\vskip 8mm

\section{Introduction}
The enumeration of cycles, paths and other subgraphs in special graph classes is a fundamental problem in extremal graph theory. Ore \cite{Ore} and Bondy \cite{Bondy} proved the following classical result on the maximum size of a nonhamiltonian graph of a given order.
\begin{samepage}
\begin{theorem}[Ore \cite{Ore}, Bondy \cite{Bondy}]\label{ta}
	The maximum size of a nonhamiltonian graph of order $n \ge 6$ is $(n^2 - 3n + 4)/2$, and this maximum is attained uniquely by the graph $K_{n-1} \cdot K_2$, where $K_{n-1} \cdot K_2$ is the graph of order $n$ obtained by identifying a vertex of $K_{n-1}$ with a vertex of $K_2$.
\end{theorem}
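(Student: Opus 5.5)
Since the graph must be nonhamiltonian, I will argue via the degree-sum (Ore/Bondy closure) condition and a count of missing edges. Let $G$ be a nonhamiltonian graph of order $n\ge 6$ with the maximum number of edges. First, $K_{n-1}\cdot K_2$ is nonhamiltonian, since it has a vertex of degree $1$. It has $\binom{n-1}{2}+1=(n^2-3n+4)/2$ edges, so $e(G)\ge (n^2-3n+4)/2$. Equivalently, the complement $\overline{G}$ has at most $\binom n2-\binom{n-1}{2}-1=n-2$ edges.

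For the upper bound and uniqueness, I will use the Bondy--Chvátal closure. If $u,v$ are nonadjacent with $d(u)+d(v)\ge n$, then adding $uv$ preserves nonhamiltonicity. This is Ore's lemma: a hamiltonian cycle through $uv$ gives a hamiltonian $u$--$v$ path in $G$, and the pigeonhole argument on neighbours along the path then produces a hamiltonian cycle in $G$. By edge-maximality, $G$ is closed, so every nonadjacent pair has degree sum at most $n-1$.

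Take a nonadjacent pair $u,v$. Then $\overline{G}$ has at least $(n-1-d(u))+(n-1-d(v))-1$ edges incident to $u$ or $v$, because the edge $uv$ is counted twice. This gives $e(\overline G)\ge 2(n-1)-(n-1)-1=n-2$. Combined with the reverse inequality, $e(\overline G)=n-2$ and $d(u)+d(v)=n-1$ exactly. Moreover, every missing edge is incident to $u$ or $v$. This holds for every nonadjacent pair, so every two edges of $\overline G$ share a vertex. Hence $\overline{G}$, which has $n-2\ge 4$ edges, is a star or a triangle. A triangle is impossible since $n-2\ge 4>3$.

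So $\overline G$ is a star with $n-2$ edges centred at a vertex $w$. In $G$, then, $w$ has degree $1$ and $G-w$ is complete. This is exactly $K_{n-1}\cdot K_2$, which gives both the maximum and uniqueness.

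The main obstacle is the closure step, Ore's lemma, which I would prove directly by the standard crossing-neighbour argument on a hamiltonian path. The intersecting-family step, where pairwise-intersecting edges form a star or a triangle, is routine but needs $n\ge 6$. Indeed, for $n=5$ the complement of a triangle, $K_2\vee \overline{K_3}$, also attains the bound; it is nonhamiltonian because removing its two dominating vertices leaves three components.
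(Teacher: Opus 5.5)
Your proof is correct and complete. The paper gives no proof of this statement: it quotes it from Ore and Bondy and uses it only to get the strict bound $m=e(\overline{H})>n-2$ in (\ref{eqm}) for $2$-connected $H$. So there is no argument in the paper to compare step by step. What you have written is essentially Ore's classical counting proof, phrased in the complement. You would count $d(u)+d(v)\le n-1$ edges at the ends of a Hamilton $u$--$v$ path plus at most $\binom{n-2}{2}$ others; instead you count the edges of $\overline G$ meeting $\{u,v\}$.

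Your version buys something over the classical one. The inequality driving it is $\deg_{\overline G}(u)+\deg_{\overline G}(v)=2(n-1)-d(u)-d(v)\ge n-1$ for every non-edge $uv$ of a maximally nonhamiltonian graph. That is exactly Lemma~\ref{LZ1}, which the paper imports from Li and Zhan for Subcase~1. Your argument shows that Theorem~\ref{ta}, including uniqueness, follows from that lemma in a few lines. Equality $e(\overline G)=n-2$ forces every edge of $\overline G$ to meet every other, so $\overline G$ is a star or a triangle.

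This also makes visible why $n\ge 6$ is needed. The triangle case at $n=5$ is $K_2\vee 3K_1$, which is precisely one of the extra extremal graphs in Theorem~\ref{t1} for $(n,k)=(5,4)$.

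Only cosmetic remarks remain. The number of edges of $\overline G$ meeting $u$ or $v$ is exactly $\deg_{\overline G}(u)+\deg_{\overline G}(v)-1$, not merely at least that. And it is worth stating explicitly that any nonhamiltonian graph attaining the maximum is automatically maximally nonhamiltonian. That is what lets you run the closure step on every non-edge, and hence apply the uniqueness argument to every extremal graph.
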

\end{samepage}

In 1980, Erd\H{o}s (see \cite{Erd}, p.201) posed the following problem: For every positive integer $n$ and determine the nonhamiltonian graph of order $n$ attaining the maximum number of Hamilton paths. In 2026, Li and Zhan \cite{LZ} resolved a more general problem: for all integers $n,k$ satisfying $1 \le k \le n-1$, $K_{n-1} \cdot K_2$ is the unique nonhamiltonian graph maximizing the number of paths of length $k$. The case $k = n-1$ provides a solution to Erd\H{o}s's problem, whereas the case $k = 1$ corresponds to the classical result of Ore and Bondy. 

It is natural to ask which nonhamiltonian graphs of order $n$ maximize the number of cycles of length $k$, where $3 \le k \le n-1$. This is the problem addressed in this paper; equivalently, we determine the exact value of the generalized Turán number $\mathrm{ex}(n, C_k, C_n),$ where $\mathrm{ex}(n, T, H)$ denotes the maximum number of copies of $T$ in $n$-vertex $H$-free graphs. 

In a classical early result, Erdős (1962) \cite{Erd2} fully characterized the generalized Turán number $\mathrm{ex}(n, K_t, K_r)$ for any pair of integers satisfying $t < r$. Further extensions of this work can be found in \cite{BB}. In 2008, Bollob\'as and Gy\H{o}ri \cite{BG2008} found the asymptotic order of $\operatorname{ex}(n,C_3,C_5)$. Gy\H{o}ri and Li \cite{GL2012} further extended the bounds to general odd cycles $C_{2\ell+1}$, pioneering the study of cycle-type generalized Turán numbers. Following the work of Bollob\'as--Gy\H{o}ri--Li \cite{BG2008,GL2012}, Alon and Shikhelman \cite{Alon} established the theoretical framework for generalized Turán numbers $\operatorname{ex}(n,T,H)$ and derived results for cycles, cliques and trees.

Building on previous results of Bollob\'as, Gy\H{o}ri, and Li \cite{BG2008,GL2012} and of Alon and Shikhelman \cite{Alon}, Gishboliner and Shapira \cite{GS} obtained asymptotically tight bounds for $\operatorname{ex}(n,C_k,C_\ell)$ for all fixed integers $k$ and $\ell$. A number of papers have investigated the enumeration of paths and cycles of specified lengths in special families of graphs \cite{ref1,DG,EG,ref6,ref9}.

\noindent\textbf{Notation.} For integers $n$ and $k$, denote the permutation number by
$$
P(n, k) =
\begin{cases} 
	\dfrac{n!}{(n-k)!} & \text{if } 0 \leq k \leq n,\text{ and }  n\geq1, \\
	0 & \text{if } 1\leq n<k,\text{ or } k<0.
\end{cases}
$$

	Let $G_1$ and $G_2$ be vertex-disjoint graphs. The \emph{union} $G_1\cup G_2$ is the graph with vertex set $V(G_1)\cup V(G_2)$ and edge set $E(G_1)\cup E(G_2)$. For any positive integer $t$, let $tG$ denote the disjoint union of $t$ copies of $G$. The \emph{join} $G_1 \vee G_2$ is obtained from $G_1 \cup G_2$ by joining every vertex of $G_1$ with every vertex of $G_2$ by an edge. The main result of this paper is stated below:
\begin{samepage}
	\begin{theorem}\label{t1}
		Let $n$ and $k$ be integers with $3 \leq k \leq n - 1$. Then the maximum number of cycles of length $k$ in a nonhamiltonian graph of order $n$ is
		$$
		\frac{P(n - 1, k)}{2k}.
		$$
		The complete list of extremal graphs is $K_{n-1}\cdot K_2$ and $K_{n-1}\cup K_1$ when $(n,k)\neq(5,4)$, and $K_{4}\cdot K_2$, $K_{4}\cup K_1$, $K_{2,3}$, and $K_2\vee 3K_1$ when $(n,k)=(5,4)$.
	\end{theorem}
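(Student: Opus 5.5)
We argue by induction on $n$, using closure and the Ore/Bondy degree-sum structure. Write $c_k(G)$ for the number of $k$-cycles. First, the lower bound: in $K_{n-1}\cup K_1$ and in $K_{n-1}\cdot K_2$ every $k$-cycle with $k\ge 3$ lies inside the $K_{n-1}$, since the pendant edge lies on no cycle. The number of $k$-cycles of $K_{n-1}$ is $P(n-1,k)/(2k)$.

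For the upper bound, let $G$ be nonhamiltonian of order $n$. We may assume $G$ is edge-maximal nonhamiltonian, because adding edges does not decrease $c_k$. For the uniqueness analysis we track when an added edge creates a new $k$-cycle. By the Bondy--Chvátal closure, a maximal nonhamiltonian graph has $d(u)+d(v)\le n-1$ for every nonadjacent pair $u,v$. The standard structure of maximal nonhamiltonian graphs (Chvátal's degree-sequence argument) then gives a degree sequence dominated by that of some $C_{n,i}=K_i\vee(K_{n-2i}\cup iK_1)$ with $1\le i\le \lfloor (n-1)/2\rfloor$.

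The cleaner route is to pick a vertex $v$ of minimum degree $\delta$, so that $\delta\le (n-1)/2$. Then
\[
c_k(G)=c_k(G-v)+\#\{k\text{-cycles through } v\}.
\]
Every $k$-cycle through $v$ is determined by an ordered pair of neighbours $x,y$ of $v$ together with an $x$--$y$ path of length $k-2$ in $G-v$. This gives at most $\binom{\delta}{2}P(n-3,k-3)$ cycles through $v$.

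There are two cases for $G-v$.
\begin{itemize}
\item If $G-v$ is nonhamiltonian, induction gives $c_k(G-v)\le P(n-2,k)/(2k)$ (for $k\le n-2$; the case $k=n-1$ is handled separately, since then cycles avoiding $v$ are Hamilton cycles of $G-v$).
\item If $G-v$ is hamiltonian, then $\delta\ge 2$ forces strong restrictions, because any two consecutive neighbours of $v$ on the Hamilton cycle would create a Hamilton cycle of $G$. Then we bound $c_k(G-v)$ trivially by $P(n-1,k)/(2k)$ minus a deficit coming from the nonadjacencies forced in $G-v$.
\end{itemize}
We then compare with the target using the identity
\[
P(n-1,k)=P(n-2,k)+kP(n-2,k-1),
\]
so the needed inequality becomes: the number of $k$-cycles through $v$, plus any excess, is at most $P(n-2,k-1)/2$. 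This is the number of $k$-cycles through a fixed vertex of $K_{n-1}$.

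Equivalently, and probably better, apply the argument of Li and Zhan directly. Each $k$-cycle yields $2k$ paths of length $k-1$ closed by an edge. Using the maximal-nonhamiltonian structure $C_{n,i}$, we compare $c_k(C_{n,i})$-type counts directly: $c_k(G)\le c_k(\mathrm{cl})$, and the closure of a degree-dominated graph is handled by counting cycles in $K_i\vee(K_{n-2i}\cup iK_1)$. Any $k$-cycle there uses the $i$ independent vertices, each flanked by two vertices of $K_i$, so each $k$-cycle meets at most $i$ of them. We then show
\[
f(i)=c_k\bigl(K_i\vee(K_{n-2i}\cup iK_1)\bigr)\le P(n-1,k)/(2k)
\]
by counting cycles according to how many independent vertices $j$ they use: choose $j$ of them and $j$ ordered "flanking" slots in $K_i$, and fill the remaining $k-2j$ positions from the $n-i$ clique vertices. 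Then $f(1)$ equals the bound, which is $K_{n-1}\cdot K_2$ plus one edge, i.e.\ $C_{n,1}=K_1\vee(K_{n-2}\cup K_1)$. Here the pendant vertex has degree $1$, so no cycle uses it. We show $f(i)<f(1)$ for $i\ge 2$ except at $(n,k,i)=(5,4,2)$, where $C_{5,2}=K_2\vee 3K_1$ gives $c_4=3=P(4,4)/8$.

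Equality then forces $G$ to be a spanning subgraph of $C_{n,1}$ (or of $K_2\vee3K_1$) containing every $k$-cycle. For $C_{n,1}$ this forces the $K_{n-1}$ to be complete, so only the pendant edge is optional, giving $K_{n-1}\cdot K_2$ and $K_{n-1}\cup K_1$. For $K_2\vee3K_1$, removing the edge of $K_2$ keeps all three $4$-cycles, giving $K_{2,3}$.

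The main obstacle is making the reduction to the graphs $C_{n,i}$ rigorous. A maximal nonhamiltonian graph need not be a subgraph of some $C_{n,i}$, only degree-dominated by one. We expect to handle this by adapting Li and Zhan's lemma: their path-counting via degree sequences bounds the number of $(k-1)$-paths whose endpoints are adjacent. The other difficulty is the sharp comparison of $f(i)$ with $f(1)$ for small $n$ and $k$, which may need a finite check around $n\le 6$.
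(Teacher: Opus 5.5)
Your proposal has a genuine gap, and it sits exactly at the step you call the ``main obstacle''. The reduction to the Chv\'atal graphs $C_{n,i}=K_i\vee(K_{n-2i}\cup iK_1)$ is not valid. Chv\'atal's theorem only says that the degree sequence of a nonhamiltonian graph is dominated by that of some $C_{n,i}$, and $c_k$ is neither determined by nor monotone in the degree sequence. The closure step does not help either. A maximally nonhamiltonian graph already satisfies $d(u)+d(v)\le n-1$ for all non-adjacent pairs, so it is its own closure. Such graphs genuinely fall outside the family. For example, the Petersen graph is maximally nonhamiltonian but is a subgraph of no $C_{10,i}$:
\begin{itemize}
\item for $i\le 2$ the independent vertices of $C_{10,i}$ have degree $i<3$;
\item for $i\in\{3,4\}$, two Petersen vertices placed there would have at least $3+3-i\ge 2$ common neighbours, giving a $4$-cycle.
\end{itemize}
So even a complete proof that $f(i)<f(1)$ for $i\ge 2$ bounds $c_k$ only for subgraphs of the $C_{n,i}$. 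Your equality analysis (``$G$ is a spanning subgraph of $C_{n,1}$ or of $K_2\vee 3K_1$'') rests on the same unproved step. A minor point: $C_{n,1}$ is $K_{n-1}\cdot K_2$ itself, not $K_{n-1}\cdot K_2$ plus an edge.

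The vertex-deletion induction does not close either. Since $P(n-2,k-1)=(n-2)(n-k)P(n-3,k-3)$, your bound $\binom{\delta}{2}P(n-3,k-3)$ on cycles through $v$, added to $c_k(G-v)\le P(n-2,k)/(2k)$, reaches the target only when $\delta(\delta-1)\le (n-2)(n-k)$. Nothing in your argument rules out $\delta$ of order $n/2$ with $k$ close to $n$; for $k=n-2$ the right side is only $2(n-2)$. The branch where $G-v$ is hamiltonian has no actual estimate. This branch occurs, for instance, for $C_{n,i}$ with $i\ge 2$ and $v$ an independent vertex. The case $k=n-1$ is deferred without an argument.

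What is missing is a way to bound $c_k$ for \emph{every} maximally nonhamiltonian graph without knowing its structure. The paper does this by counting the cycles of $K_n$ that are destroyed:
\begin{itemize}
\item non-$2$-connected $G$ is handled via blocks and the inequality $\sum P(n_i+1,k)\le P(\sum n_i,k)$;
\item $k=n-1$ is handled via Li--Zhan's bound on Hamilton paths;
\item for $2$-connected $G$ and $k\le n-2$, it takes a maximally nonhamiltonian $H\supseteq G$ with $m=e(\overline H)\ge n-1$.
\end{itemize}
In the last case it double counts pairs (cycle, non-edge) and (cycle, pair of non-edges), using Li--Zhan's condition $\deg_{\overline H}(u)+\deg_{\overline H}(v)\ge n-1$ and Kopylov's bound on $m$. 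Cauchy--Schwarz then shows that at least $P(n-1,k-1)/2=\bigl(P(n,k)-P(n-1,k)\bigr)/(2k)$ of the $k$-cycles of $K_n$ are missing from $H$.
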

\end{samepage}

In 2018, Füredi, Kostochka and Luo \cite[Corollary 5]{FKL} obtained that $\operatorname{ex}(n, C_k, C_n) = P(n-1, k)/(2k)$ for $3 \leq k \leq (n-3)/9$. By Theorem~\ref{t1}, we immediately deduce the following corollary, which extends their result and further characterizes all extremal graphs.

\begin{corollary}
	For $3\leq k\leq n-1$, we have $\mathrm{ex}(n, C_k, C_n)=P(n-1, k)/(2k)$. The complete list of extremal graphs is $K_{n-1}\cdot K_2$ and $K_{n-1}\cup K_1$ when $(n,k)\neq(5,4)$, and $K_{4}\cdot K_2$, $K_{4}\cup K_1$, $K_{2,3}$, and $K_2\vee 3K_1$ when $(n,k)=(5,4)$.
\end{corollary}

 The paper is organized as follows. In Section 2, we introduce the notation and tools required for later proofs. In Section 3, we give the proof of Theorem \ref{t1}.
		
\section{Preliminaries}		
		
		Throughout this paper, we consider finite simple graphs and use standard terminology and notation from \cite{2,7}. Let $G$ be a graph with vertex set $V(G)$ and edge set $E(G)$, where $n=|V(G)|$ and $e(G)=|E(G)|$ are called the \emph{order} and \emph{size} of $G$, respectively. For $v \in V(G)$, let $N_G(v)$ and $\deg_G(v)$ denote the neighborhood and degree of $v$. 
		
		A \emph{cut-vertex} of a graph $G$ is a vertex whose deletion increases the number of connected components of $G$. A \emph{block} of $G$ is a maximal connected subgraph that contains no cut-vertex. For a subset $S \subseteq V(G)$, we denote by $G[S]$ the subgraph of $G$ induced by $S$, and by $G - S$ the subgraph obtained from $G$ by deleting the vertices in $S$ and their incident edges. We write $\overline{G}$ for the complement of $G$.  	
		
		Let $K_n$ be the complete graph on $n$ vertices, $K_{m,n}$ be the complete bipartite graph with vertex partitions of order $m$ and $n$, and $C_k$ be a cycle of length $k$ (or a $k$-cycle). A nonhamiltonian graph $G$ is said to be \textit{maximally nonhamiltonian} if for every non-edge $e \in E(\overline{G})$, the graph $G + e$ is hamiltonian. For maximally nonhamiltonian graphs, Li and Zhan \cite{LZ} gave the following useful results.
		
		\begin{lemma}[Li and Zhan \cite{LZ}]\label{LZ1}
			Let $G$ be a maximally nonhamiltonian graph of order $n \geq 3$. Then, for every non-edge $uv \in E(\overline{G})$, we have $\deg_{\overline{G}}(u) + \deg_{\overline{G}}(v) \geq n - 1$.
		\end{lemma}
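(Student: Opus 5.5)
The plan is to reproduce the classical Ore/Bondy crossing argument on a Hamilton path of $G$ and then pass to complement degrees. Fix a non-edge $uv\in E(\overline{G})$. Since $G$ is maximally nonhamiltonian, $G+uv$ has a Hamilton cycle. This cycle must use the edge $uv$, because $G$ itself is nonhamiltonian. Deleting $uv$ from it gives a Hamilton path of $G$, which I write as $u=v_1,v_2,\dots,v_n=v$.

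The key step is to show $\deg_G(u)+\deg_G(v)\le n-1$. Consider the two index sets
$$S=\{\,i\in\{1,\dots,n-1\} : v_1v_{i+1}\in E(G)\,\},\qquad T=\{\,i\in\{1,\dots,n-1\} : v_iv_n\in E(G)\,\}.$$
Every neighbour of $u=v_1$ is some $v_{i+1}$ with $1\le i\le n-1$. Every neighbour of $v=v_n$ is some $v_i$ with $1\le i\le n-1$. Hence $|S|=\deg_G(u)$ and $|T|=\deg_G(v)$.

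I claim $S\cap T=\emptyset$. Suppose some index $i$ lies in both sets.
\begin{itemize}
\item If $i=1$, then $v_1v_n\in E(G)$, contradicting $uv\notin E(G)$.
\item If $i=n-1$, then $v_1v_n\in E(G)$ again, the same contradiction.
\item If $2\le i\le n-2$, then $v_1v_2\cdots v_i\,v_nv_{n-1}\cdots v_{i+1}v_1$ is a Hamilton cycle of $G$, contradicting that $G$ is nonhamiltonian.
\end{itemize}
So $S$ and $T$ are disjoint subsets of an $(n-1)$-element set, and therefore $\deg_G(u)+\deg_G(v)\le n-1$.

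Finally, I convert to the complement. Since $\deg_{\overline{G}}(w)=n-1-\deg_G(w)$ for every vertex $w$,
$$\deg_{\overline{G}}(u)+\deg_{\overline{G}}(v)=2(n-1)-\bigl(\deg_G(u)+\deg_G(v)\bigr)\ge n-1.$$
This is the required inequality. There is no real obstacle here. The only care needed is the bookkeeping at the endpoint indices $i=1$ and $i=n-1$, which are excluded precisely because $uv$ is a non-edge. The hypothesis $n\ge 3$ ensures that the path and the cycle construction are meaningful.
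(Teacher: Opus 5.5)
Your proof is correct and complete, including the endpoint indices $i=1$ and $i=n-1$ and the final conversion $\deg_{\overline{G}}(w)=n-1-\deg_G(w)$. The paper gives no proof of this lemma and simply cites Li and Zhan; your argument is the standard Ore-type crossing argument (the one behind the Bondy--Chv\'atal closure), which is the natural way to establish it.
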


		\begin{lemma}[Li and Zhan \cite{LZ}]\label{LZ2}
			The maximum number of Hamilton paths in a nonhamiltonian graph of order $n (\geq 6)$ is $(n-2)!$, and this maximum number is attained uniquely by $K_{n-1} \cdot K_2$.
		\end{lemma}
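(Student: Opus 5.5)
The plan is to write the number of Hamilton paths as a sum over non-edges, bound each term using degrees, and close the resulting degree inequality with Lemma~\ref{LZ1}. The key-inequality step is not yet justified, and equality must come out tight.

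\textbf{Reduction.} Adding an edge to a nonhamiltonian graph never destroys a Hamilton path. So it suffices to bound the number of Hamilton paths in a \emph{maximally} nonhamiltonian graph $G$ of order $n\ge 6$. We then show that the bound is attained only by $K_{n-1}\cdot K_2$, and separately check that no proper spanning subgraph of $K_{n-1}\cdot K_2$ attains it. The latter holds because every edge of $K_{n-1}\cdot K_2$ lies on some Hamilton path, so deleting it strictly lowers the count.

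\textbf{Decomposition.} If a Hamilton path of $G$ has endpoints $u,v$, then $uv\notin E(G)$, since otherwise $G$ would be hamiltonian. Hence the number $h(G)$ of Hamilton paths equals $\sum_{uv\in E(\overline G)} h(u,v)$, where $h(u,v)$ is the number of Hamilton $u$--$v$ paths. In a Hamilton $u$--$v$ path the second vertex is a neighbour of $u$, and the remaining $n-3$ inner vertices can be ordered in at most $(n-3)!$ ways. This gives
$$h(u,v)\le \min\{\deg_G(u),\deg_G(v)\}\,(n-3)!.$$
In $K_{n-1}\cdot K_2$ this is exact: every non-edge joins the pendant vertex $x$ (of degree $1$) to one of the $n-2$ other non-cut vertices, and each such non-edge carries exactly $(n-3)!$ paths. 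This gives the total $(n-2)(n-3)!=(n-2)!$, so the bound is tight.

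\textbf{Key inequality.} It would therefore suffice to prove
$$\sum_{uv\in E(\overline G)}\min\{\deg(u),\deg(v)\}\le n-2 \quad(\ast)$$
for maximally nonhamiltonian $G$, or a suitably refined version of it. The tool is Lemma~\ref{LZ1}, which in $G$ reads $\deg(u)+\deg(v)\le n-1$ for every non-edge $uv$; this is just Ore's condition failing. Combining this with the fact that the complement has many edges around low-degree vertices, I would bound $(\ast)$ by orienting each non-edge towards its smaller-degree endpoint $u$. Each vertex $u$ of degree $d$ then receives at most $\deg_{\overline G}(u)=n-1-d$ non-edges. The main obstacle is that $(\ast)$ as stated is probably false for some graphs, for instance $K_{2,3}$-like structures at small $n$. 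So the crude per-path bound $\min\{\deg(u),\deg(v)\}(n-3)!$ will have to be sharpened when both endpoints have large degree.

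The sharpening I would try first uses the Pósa/Bondy--Chvátal rotation: if $v_1\cdots v_n$ is a $u$--$v$ Hamilton path and $u\sim v_{i+1}$, then $v_i\not\sim v$. Counting the second and the penultimate vertices jointly then gives $h(u,v)\le \deg(u)\deg(v)(n-4)!$, together with a correction from these forbidden pairs. I would use the per-path bound $\min\{\deg(u),\deg(v)\}(n-3)!$ when one endpoint has small degree and the joint bound otherwise, and verify the total is at most $(n-2)!$ with equality only when some vertex has degree $1$ and $G-x=K_{n-1}$. The case $n=6$, and any small cases where the estimates are too weak, I would check directly. The condition $n\ge 6$ is needed exactly to exclude the small exceptional graphs such as $K_2\vee 3K_1$ and $K_{2,3}$.
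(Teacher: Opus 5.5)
The paper does not prove this statement. It quotes it from Li and Zhan and uses it only as a black box in Lemma~\ref{lem2}, so your proposal has to stand on its own, and it has a genuine gap. The following parts are correct but routine:
the reduction to maximally nonhamiltonian graphs,
the identity $h(G)=\sum_{uv\in E(\overline{G})}h(u,v)$,
the tightness check on $K_{n-1}\cdot K_2$, and
the lifting of uniqueness to spanning subgraphs.
The whole content of the lemma is the inequality $\sum_{uv\in E(\overline{G})}h(u,v)\le (n-2)!$ with its equality case. For that you give only a plan, and the plan cannot work as described: any bound on $h(u,v)$ in terms of $\deg(u)$ and $\deg(v)$ alone is far too weak.

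Take the maximally nonhamiltonian graph $K_2\vee(K_{n-4}\cup 2K_1)$, with $K_2=\{a_1,a_2\}$ and isolated vertices $y_1,y_2$. It has $2(n-4)$ non-edges $y_iz$ with $z\in V(K_{n-4})$, and for each one $\deg y_i=2$ and $\deg z=n-3$. So both of your estimates, $\min\{\deg y_i,\deg z\}(n-3)!$ and $\deg(y_i)\deg(z)(n-4)!$, equal $2(n-3)!$. These terms alone sum to $4(n-4)(n-3)!>(n-2)!$ for every $n\ge 6$; at $n=6$ that is at least $48$ against $24$. The true value is $h(y_i,z)=2(n-5)!$, because $y_{3-i}$ is forced into the interior between $a_1$ and $a_2$. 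No endpoint-degree bound can see that restriction on interior vertices.

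For $K_a\vee(a+1)K_1$ with $n=2a+1$, which is also maximally nonhamiltonian, both of your sums exceed $(n-2)!$ by a factor growing like $n^2$. The true count there is only $(a+1)!\,a!/2$. So $(\ast)$ is not a small-$n$ anomaly: it fails for every $n\ge 5$, since already for $K_2\vee(K_{n-4}\cup 2K_1)$ its left side is $4n-14$.

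The missing ingredient is a global argument. Your rotation observation is the right kind of information if you apply it to \emph{every} neighbour of $u$ on the path, not only to the second vertex. In the first example it forces $y_{3-i}$ to be the predecessor of whichever of $a_1,a_2$ is not $v_2$. In the second it forces the path to alternate between the two sides. But turning all these constraints into a count that is at most $(n-2)!$, with equality only at $K_{n-1}\cdot K_2$, is exactly the step you leave as ``a correction'' and ``verify the total''. So the lemma is not proved.

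A reasonable way to organize a real proof is to split by connectivity first. A maximally nonhamiltonian graph with a cut vertex must be $K_s\cdot K_t$ with $s+t=n+1$. It has exactly $(s-1)!(t-1)!\le (n-2)!$ Hamilton paths, with equality only for $\{s,t\}=\{2,n-1\}$. What remains is the strict bound $h(G)<(n-2)!$ for $2$-connected nonhamiltonian $G$ with $n\ge 6$. That is the heart of the lemma, and your proposal supplies nothing that works there.
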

		
			A classical result of Erd\H{o}s and Gallai \cite{PE2} gives a sharp size threshold that ensures a graph to contain a cycle longer than a given length, with the exceptional extremal graphs characterized. Kopylov \cite{GK} sharpened the Erd\H{o}s--Gallai bound for $2$-connected graphs; Luo \cite{Luo} later generalized Kopylov's theorem to a clique-counting version. For $n\geq c\geq 4$ and $\frac c2>a\geq 1$, let $$H_{n,c,a}=K_a\vee (K_{c-2a}\cup (n-c+a)K_1)\text{,\quad}g(n,c,a)=e(H_{n,c,a}) = \binom{c-a}{2} + (n-c+a)a.$$ Clearly, when $a \ge 2$, $H_{n,c,a}$ is $2$-connected and has no cycle of length $c$ or longer.	
		\begin{lemma}[Kopylov \cite{GK}, Luo \cite{Luo}]\label{Ko}
			Let $n \ge c \ge 5$ and let $a = \left\lfloor \frac{c-1}{2} \right\rfloor$. If $G$ is a 2-connected $n$-vertex graph with circumference less than $c$, then $
			e(G) \le \max \{ g(n,c,2), g(n,c,a) \},$
			with equality only if $G = H_{n,c,2}$ or $G = H_{n,c,a}$.
		\end{lemma}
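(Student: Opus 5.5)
The plan is to follow the \emph{disintegration} approach to Kopylov's theorem, as in the work of Füredi, Kostochka, Luo and Verstraëte, rather than Kopylov's original path-rotation argument. Adding edges never destroys $2$-connectivity, so first replace $G$ by an edge-maximal graph with circumference less than $c$. In this graph, every non-edge $uv$ is the pair of endpoints of a $u$--$v$ path on at least $c$ vertices. The basic tool is Kopylov's path lemma.

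\textbf{Kopylov's path lemma.} If $G$ is $2$-connected and $P$ is an $x$--$y$ path on $m$ vertices, then $G$ contains a cycle of length at least $\min\{m,\ \deg_P(x)+\deg_P(y)\}$.

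A Pósa-type rotation argument then shows that the endpoints of such long paths can be assumed to have small degree.

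Next, for $t=a=\lfloor (c-1)/2\rfloor$, perform the $t$-disintegration: repeatedly delete a vertex of degree at most $t$ in the current graph, and call what remains $H$. There are two cases.

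\textbf{Case 1: $H$ is empty.} Then $e(G)\le t(n-t)+\binom{t}{2}$ up to lower-order corrections, and a direct computation bounds this by $\max\{g(n,c,2),g(n,c,a)\}$.

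\textbf{Case 2: $H$ is nonempty.} Every vertex of $H$ has degree greater than $t\ge (c-2)/2$ in $H$. Combining the path lemma with edge-maximality, I will show that $H$ is a clique on $c-s$ vertices for some $2\le s\le a$. Moreover, I will show that each vertex deleted during the process had at most $s$ neighbours among the vertices present at its deletion. The key claim here is that a vertex outside $H$ with at least $s+1$ neighbours in the remaining graph would combine with a long path through the clique to create a cycle of length at least $c$. Summing over the deletions gives
$$e(G)\le \binom{c-s}{2}+(n-c+s)s=g(n,c,s).$$

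Since $g(n,c,s)$ is a convex quadratic in $s$, on $2\le s\le a$ its maximum is attained only at $s=2$ or $s=a$. For the equality case, equality forces $s\in\{2,a\}$. It also forces the clique $H$, every deleted vertex having exactly $s$ neighbours, and those neighbours to be the same $s$-set inside $H$, which yields $G=H_{n,c,s}$.

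\textbf{Main obstacle.} I expect the hardest step to be the structural claim that a nonempty core is a clique of order exactly $c-s$ with all outside vertices attached to at most $s$ vertices. This requires careful use of edge-maximality together with the path lemma applied to longest paths inside the core. I would first try to show that any two nonadjacent core vertices are joined by a path of length at least $c-1$ all of whose rotated endpoints stay in $H$. This would give degree sum at least $2(t+1)\ge c$ and hence a contradiction.
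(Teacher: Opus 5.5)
The paper does not prove this lemma; it is quoted from Kopylov and Luo, so your outline has to stand on its own. Your framework is the standard one: saturate, use Kopylov's path lemma, take the core $H$ left by the $a$-disintegration, and show $H\cong K_{c-s}$ with $2\le s\le a$. The clique step works if you phrase it cleanly. Take a longest path $P$ among all paths with both ends in $H$. Its ends have all their $H$-neighbours on $P$, so $\deg_P(x)+\deg_P(y)\ge 2(a+1)\ge c$. Saturation applied to a non-adjacent pair in $H$ gives $|V(P)|\ge c$. Your remark that rotations make endpoint degrees small points the wrong way: you need them large.

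\textbf{Gap in the key claim.} As stated, the claim concerns the $a$-deletion order, where a vertex may be deleted while it still has up to $a$ neighbours. Your justification is also purely local. A single vertex with $s+1$ neighbours, some possibly outside $H$, adds only itself to a cycle through $K_{c-s}$. Nothing local supplies the $s$ extra vertices needed for length $c$. The missing idea is to pass to the $s$-disintegration instead.
\begin{itemize}
\item Let $H'$ be the $(s+1)$-core. Since $s\le a$, we have $H\subseteq H'$.
\item If $H'\neq H$, some $x\in V(H')\setminus V(H)$ has a non-neighbour $y\in V(H)$. Otherwise $H+x$ would be a clique of minimum degree at least $a+1$, so $x$ would lie in the $(a+1)$-core.
\item Saturation then gives an $x$--$y$ path on at least $c$ vertices.
\item Take a longest path with one end in $H$ and the other in $H'$. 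Its end-degrees on the path sum to at least $(c-s-1)+(s+1)=c$.
\item Kopylov's lemma now gives a cycle of length at least $c$, a contradiction.
\end{itemize}
Hence $H'=H$, and summing over the $s$-disintegration gives $e(G)\le\binom{c-s}{2}+s(n-c+s)$. What makes this work is the minimum degree of the core, not the degree of one vertex.

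\textbf{Gap in the equality analysis.} Take odd $c=2a+1$.
\begin{itemize}
\item The Case 1 bound $\binom{a}{2}+a(n-a)$ equals $g(n,c,a)$ exactly.
\item $H_{n,c,a}=K_a\vee (n-a)K_1$ has empty $(a+1)$-core.
\item In Case 2, $|V(H)|\ge a+2$ forces $s\le a-1$, and for $c=5$ Case 2 cannot occur at all.
\end{itemize}
So this extremal graph arises only in Case 1, and you give no argument there that equality forces $G=H_{n,c,a}$. Equality in the count only says two things: the last $a+1$ deleted vertices span $K_{a+1}$, and every earlier one had exactly $a$ neighbours still present. Every $a$-tree satisfies both, so the circumference bound must be used in an actual argument. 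In Case 2 the count again only gives that each deleted vertex had exactly $s$ neighbours at deletion. That these vertices are independent and attach to one common $s$-set in $H$ needs a separate cycle-building argument using $c(G)<c$.
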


\section{Proof of Theorem \ref{t1}}\label{sec:3}
For an integer $k$ with $3 \leq k \leq n-1$, let $c_k(G)$ be the number of $k$-cycles in $G$, and let $G$ be an $n$-vertex nonhamiltonian graph with $3\leq k\leq n-1$. 

If $G \cong K_{n-1} \cdot K_2$ or $K_{n-1} \cup K_1$ when $(n,k) \neq (5,4)$, or $G \cong K_4 \cdot K_2$, $K_4 \cup K_1$, $K_{2,3}$, or $K_2 \vee 3K_1$ when $(n,k) = (5,4)$, then $c_k(G) = c_k(K_{n-1}) = P(n-1,k)/(2k)$. It suffices to prove $c_k(G) \leq P(n-1,k)/(2k)$, with equality only if $G \cong K_{n-1} \cdot K_2$ or $K_{n-1} \cup K_1$ for $(n,k) \neq (5,4)$, and $G \cong K_4 \cdot K_2$, $K_4 \cup K_1$, $K_{2,3}$, or $K_2 \vee 3K_1$ for $(n,k) = (5,4)$.

	In this section, to prove Theorem \ref{t1}, we distinguish two cases according to whether $G$ is 2-connected.  
\vspace{8pt}

{\noindent\bf Case 1.} $G$ is not 2-connected.

\vspace{8pt}
By the definition of $P(n,k)$, we have the following observation.

\begin{observation}\label{obs}
$P(n,k)$ is non-decreasing in $n$, and it is strictly increasing whenever $k\ge 1$ and $n\ge k-1$.
\end{observation}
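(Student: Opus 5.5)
The plan is a direct comparison of $P(n,k)$ with $P(n+1,k)$ for every admissible $n\ge 1$, splitting into cases according to the three regimes in the definition of $P(n,k)$. Since monotonicity on the integers follows from comparing consecutive values, it suffices to show $P(n,k)\le P(n+1,k)$ for all $n\ge1$, with strict inequality when $k\ge1$ and $n\ge k-1$.

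First I dispose of the degenerate values of $k$. If $k<0$, then $P(n,k)=0$ for every $n$, so the sequence is constant and hence non-decreasing; the strictness claim does not apply because $k<0$. If $k=0$, then $P(n,0)=n!/n!=1$ for every $n\ge1$, again constant; strictness is not claimed since $k\ge1$ fails.

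Now let $k\ge1$. I split into three cases according to the position of $n$ relative to $k$.
\begin{itemize}
\item If $n+1<k$, then $P(n,k)=P(n+1,k)=0$, which gives weak monotonicity. Note that here $n<k-1$, so strictness is not claimed.
\item If $n+1=k$, that is $n=k-1$, then $P(n,k)=0<k!=P(k,k)=P(n+1,k)$. This requires $n\ge1$ for the definition to apply; when $k=1$ the value $n=0$ lies outside the domain and is simply ignored.
\item If $n\ge k$, both values are given by the factorial formula, they are positive, and
$$\frac{P(n+1,k)}{P(n,k)}=\frac{(n+1)!/(n+1-k)!}{n!/(n-k)!}=\frac{n+1}{n+1-k}>1,$$
because $k\ge1$ and $n+1-k\ge1$.
\end{itemize}

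Combining the cases gives $P(n,k)\le P(n+1,k)$ in general, with strict inequality exactly when $k\ge1$ and $n\ge k-1$, which is the statement. There is no real obstacle. The only point needing care is the boundary step $n=k-1$, where the value jumps from $0$ to $k!$ and the ratio argument does not apply, together with respecting the convention $n\ge1$ in the definition of $P$.
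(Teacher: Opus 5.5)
Your proof is correct and is essentially what the paper does: the paper simply states that the observation follows from the definition of $P(n,k)$, and your case check supplies exactly that verification (constant for $k\le 0$; both values zero when $n+1<k$; the jump from $0$ to $k!$ at $n=k-1$; and the ratio $(n+1)/(n+1-k)>1$ for $n\ge k$). Your attention to the convention $n\ge 1$ at the boundary case $k=1$ is also appropriate.
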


\begin{lemma}\label{lemP1}
	Let $a,b,k$ be integers with $a,b\geq 1$ and $k\geq 3$. Then\\ {\rm(i)} $P(a+1,k)+P(b+1,k)\leq P(a+b,k)$ with equality if and only if  $a+b<k$ or $1\in\{a,b\}$.\\ {\rm (ii)} $P(a,k)+P(b,k)\leq P(a+b-1,k)$ with equality if and only if $a+b-1<k$ or $1\in\{a,b\}$.
\end{lemma}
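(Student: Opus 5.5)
The plan is to prove both parts by one injective counting argument. For $m\ge 1$ and $0\le k\le m$, $P(m,k)$ is the number of injective sequences $(x_1,\dots,x_k)$ of length $k$ with entries from an $m$-element set. For $1\le m<k$ there are no such sequences, which matches the convention $P(m,k)=0$. Since $a,b\ge 1$, every first argument appearing in (i) and (ii) is at least $1$, so this interpretation applies throughout.

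For (i), let $U$ be a set with $|U|=a+b$. Choose $A,B\subseteq U$ with $|A|=a+1$, $|B|=b+1$, $A\cup B=U$ and $|A\cap B|=2$; this is possible because $(a+1)+(b+1)-2=a+b$.
\begin{itemize}
\item Every injective $k$-sequence from $A$ or from $B$ is an injective $k$-sequence from $U$.
\item No sequence is counted twice: a sequence lying in both $A$ and $B$ has all its entries in $A\cap B$, which has only $2<3\le k$ elements.
\end{itemize}
Hence $P(a+1,k)+P(b+1,k)\le P(a+b,k)$. Equality holds exactly when every injective $k$-sequence from $U$ lies entirely in $A$ or entirely in $B$.

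We then characterize equality in (i).
\begin{itemize}
\item If $a+b<k$, all three quantities are $0$.
\item If $a=1$, then $P(2,k)=0$ and $B=U$, so both sides equal $P(a+b,k)$. The case $b=1$ is symmetric.
\item Conversely, suppose $a,b\ge 2$ and $a+b\ge k$. Then $A\setminus B$ and $B\setminus A$ are both nonempty. Build an injective sequence that starts with an element of $A\setminus B$, then an element of $B\setminus A$, and continues with $k-2$ further distinct elements of $U$; this is possible since $|U|=a+b\ge k$. This sequence lies in neither $A$ nor $B$, so the inequality is strict.
\end{itemize}

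For (ii) the argument is the same with overlap $1$. Take $|U|=a+b-1$ and $|A|=a$, $|B|=b$ with $A\cup B=U$ and $|A\cap B|=1$. Common sequences would use only one element, which is impossible since $k\ge 3$. This gives $P(a,k)+P(b,k)\le P(a+b-1,k)$, and equality is analysed exactly as above.
\begin{itemize}
\item If $a+b-1<k$, every term is $0$.
\item If $a=1$ (or symmetrically $b=1$), then $P(1,k)=0$ and the other set is all of $U$.
\item If $a,b\ge 2$ and $a+b-1\ge k$, the "mixed" sequence using one element of $A\setminus B$ and one of $B\setminus A$ gives strictness.
\end{itemize}

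The only delicate step is the equality characterization. One must check that the boundary conventions in the definition of $P$ are consistent with the counting interpretation, namely that the first argument is always $\ge 1$ here and that $P(m,k)=0$ for $m<k$. One must also check that the mixed sequence exists precisely when the union has at least $k$ elements.
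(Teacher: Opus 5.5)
Your proof is correct, but it argues differently from the paper.

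\textbf{The paper's route.} It proves (i) algebraically. Using the recurrence $P(i+1,k)-P(i,k)=kP(i,k-1)$, it writes $P(a+b,k)-P(a+1,k)$ as the telescoping sum $\sum_{j=a+1}^{a+b-1}kP(j,k-1)$. It compares this termwise (with $j=t+a-1>t$) against $\sum_{t=2}^{b}kP(t,k-1)=P(b+1,k)-P(2,k)=P(b+1,k)$, taking strictness from the monotonicity Observation. It then obtains (ii) from (i) rather than proving it directly: applying (i) to $a-1,b-1$ gives $P(a,k)+P(b,k)\le P(a+b-2,k)$, followed by the strict step $P(a+b-2,k)<P(a+b-1,k)$.

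\textbf{Your route.} You read all three quantities as counts of injective $k$-sequences. You then embed the two left-hand families into the right-hand one via ground sets $A,B$ whose overlap has fewer than $k$ elements.

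\textbf{What your approach buys.}
\begin{itemize}
\item (i) and (ii) get one uniform proof, with overlap $2$ versus overlap $1$.
\item The equality analysis becomes the transparent statement that the embedding is onto, and an explicit ``mixed'' sequence witnesses strictness. In the paper, the strict term of the termwise comparison (e.g.\ $j=a+b-1$ versus $t=b$) is left implicit.
\item It yields the more general fact that $P(|A|,k)+P(|B|,k)\le P(|A\cup B|,k)$ whenever $|A\cap B|<k$.
\item It mirrors the way the lemma is used later: cycles confined to blocks that share a cut vertex, or to separate components.
\end{itemize}
Your check of the boundary conventions (all first arguments are $\ge 1$, and $P(m,k)=0$ for $1\le m<k$) is exactly what makes the counting interpretation valid here.

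\textbf{What the paper's approach buys.} Given the recurrence, its argument is shorter to write down.
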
 

\begin{proof}
	(i)	If $a+b<k$ or $1\in\{a,b\}$, the equality holds since $a,b\geq 1$ and $k\geq 3$. Now we suppose that $a+b\geq k$ and $ a,b\ge 2$. Clearly, we have $P(i+1,k)-P(i,k)=kP(i,k-1)$ for $i\geq 1$. Combining Observation \ref{obs}, $a\geq 2$ and $k\geq 3$, we have
	\begin{align*}
		P(a+b,k)-P(a+1,k)&=\sum_{j=a+1}^{a+b-1}(P(j+1,k)-P(j,k))=\sum_{j=a+1}^{a+b-1}kP(j,k-1)\\&> \sum_{t=2}^{b}kP(t,k-1)= \sum_{t=2}^{b}(P(t+1,k)-P(t,k))\\&= P(b+1,k)-P(2,k)=P(b+1,k).
	\end{align*}
	 Hence (i) holds. 
	
	(ii) If $a+b-1<k$ or $1\in\{a,b\}$, then the equality holds since $a,b\geq 1$ and $k\geq 3$. Now we suppose that $a+b-1\geq k$ and $a,b\geq 2$. By (i) and Observation \ref{obs}, we have $P(a,k)+P(b,k)\leq P(a+b-2,k)<P(a+b-1,k)$. Hence (ii) holds.
\end{proof}
	\begin{samepage}
		\begin{lemma}\label{lemP3}
			Let $k \ge 3$ be an integer, and let $\{n_i\}_{i=1}^t$ be a sequence with $t \ge 2$ and $n_i \ge 1$ for each $i$. Then we have\\
			{\rm (i)} If $\sum_{i=1}^tn_i\geq k$, then $\sum_{i=1}^t P(n_i+1,k)\leq P\left(\sum_{i=1}^t n_i,k\right)$ with equality if and only if $t=2$ and $1\in\{n_1,n_2\}$. \\
			{\rm (ii)} If $\sum_{i=1}^tn_i\geq k+1$, then   $\sum_{i=1}^t P(n_i,k)\leq P\left(\sum_{i=1}^t n_i-1,k\right)$ with equality if and only if $t=2$ and $1\in\{n_1,n_2\}$.
		\end{lemma}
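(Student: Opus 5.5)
We argue by induction on $t$, using Lemma \ref{lemP1} both for the base case and for the inductive step. In the inductive step we merge two of the $n_i$ into a single term. The price is that the total sum drops by one. That is harmless when the sum stays large, and the boundary case where it does not is handled directly.

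\emph{Part (i).} Write $S=\sum_{i=1}^t n_i\ge k$.

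\begin{enumerate}[label=(\alph*)]
\item For $t=2$, apply Lemma \ref{lemP1}(i) with $a=n_1$, $b=n_2$. This gives $P(n_1+1,k)+P(n_2+1,k)\le P(S,k)$. Since $S\ge k$, the alternative $a+b<k$ is excluded, so equality holds if and only if $1\in\{n_1,n_2\}$.
\item For $t\ge 3$, Lemma \ref{lemP1}(i) gives $P(n_1+1,k)+P(n_2+1,k)\le P(m+1,k)$ with $m=n_1+n_2-1\ge 1$. This replaces the sequence by $m,n_3,\dots,n_t$: it has $t-1\ge 2$ terms, each $\ge 1$, and sum $S-1$.
\item If $S-1\ge k$, the induction hypothesis bounds the sum by $P(S-1,k)$. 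Since $S-1\ge k-1$, Observation \ref{obs} gives $P(S-1,k)<P(S,k)$. So the inequality is strict for $t\ge 3$, as the equality characterization requires.
\item If $S=k$, we argue directly. Since $t\ge 3$ and every $n_j\ge 1$, each $n_i\le S-(t-1)\le k-2$. Hence $n_i+1<k$, so every $P(n_i+1,k)=0$, and the sum is $0<k!=P(k,k)$.
\end{enumerate}
This proves (i), with equality exactly when $t=2$ and $1\in\{n_1,n_2\}$.

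\emph{Part (ii).} Write $S=\sum_{i=1}^t n_i\ge k+1$; the argument is identical in structure.

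\begin{enumerate}[label=(\alph*)]
\item For $t=2$, Lemma \ref{lemP1}(ii) gives the bound $P(S-1,k)$. Since $S-1\ge k$, equality holds if and only if $1\in\{n_1,n_2\}$.
\item For $t\ge 3$, Lemma \ref{lemP1}(ii) gives $P(n_1,k)+P(n_2,k)\le P(m,k)$ with $m=n_1+n_2-1\ge1$. The new sequence has $t-1\ge 2$ terms and sum $S-1$.
\item If $S-1\ge k+1$, induction bounds the sum by $P(S-2,k)$. Since $S-2\ge k-1$, Observation \ref{obs} gives $P(S-2,k)<P(S-1,k)$, which is strict.
\item If $S=k+1$, then each $n_i\le k+1-(t-1)\le k-1<k$. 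So all terms vanish, and the sum is $0<P(k,k)=P(S-1,k)$.
\end{enumerate}

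The only delicate point is the boundary case in step (d) of each part, where merging pushes the sum below the threshold required by the induction hypothesis. It is resolved by the direct vanishing argument above. The strict inequality for $t\ge3$ comes from the strict monotonicity in Observation \ref{obs}, whose hypothesis $n\ge k-1$ holds in each use.
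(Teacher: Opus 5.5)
Your proof is correct and follows essentially the same route as the paper: merge terms using Lemma~\ref{lemP1} and obtain strictness for $t\ge 3$ from the strict monotonicity in Observation~\ref{obs}. The only difference is bookkeeping. The paper chains Lemma~\ref{lemP1} (which has no lower bound on the sum) all the way down to $P(S-t+2,k)$, resp.\ $P(S-t+1,k)$, and then applies monotonicity once, so it never needs your boundary case (d), which arises only because you pass through the induction hypothesis with its sum threshold.
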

	\end{samepage}

	\begin{proof}
		If $t=2$, then the results of (i) and (ii) hold by Lemma \ref{lemP1}. Now we may assume $t\geq3$.
			
		(i) By repeatedly applying (i) of Lemma \ref{lemP1}, we obtain $$\sum_{i=1}^t P(n_i+1,k)\leq P(n_1+n_2,k)+\sum_{i=3}^{t} P(n_i+1,k)\leq P\left(\sum_{i=1}^{t}n_i-t+2,k\right)<P\left(\sum_{i=1}^t n_i,k\right),$$
		where the last inequality follows from $t\geq 3,\ \sum_{i=1}^t n_i\geq k$ and Observation \ref{obs}.
		
		(ii) By repeatedly applying (ii) of Lemma \ref{lemP1}, we obtain $$\sum_{i=1}^t P(n_i,k)\leq P(n_1+n_2-1,k)+\sum_{i=3}^{t} P(n_i,k)\leq P\left(\sum_{i=1}^{t}n_i-t+1,k\right)<P\left(\sum_{i=1}^t n_i-1,k\right),$$
		where the last inequality follows from $t\geq 3,\ \sum_{i=1}^t n_i\geq k+1$ and Observation \ref{obs}.
	\end{proof}

\vspace{5pt}

	We now consider the two subcases: $G$ is disconnected, and $G$ has connectivity $1$.
	
	If $G$ is disconnected, let its components have orders $n_1, \dots, n_t$, where $t \ge 2$ and $\sum_{i=1}^t n_i = n\geq k+1$. Since every cycle in a single component, (ii) of Lemma~\ref{lemP3} gives 
	\[
	c_k(G) \le \sum_{i=1}^t c_k(K_{n_i}) = \frac{1}{2k} \sum_{i=1}^{t} P(n_i, k)\leq \frac{P\left(n-1,k\right)}{2k} .
	\] If $c_k(G) =P\left(n-1,k\right)/(2k)$, then all the above inequalities become equalities. By (ii) of Lemma \ref{lemP3}, the last equality holds if and only if $t=2$ and $1\in\{n_1,n_2\}$. Thus, $G$ is isomorphic to a spanning subgraph of $K_{n-1}\cup K_1$, and furthermore, $G\cong K_{n-1}\cup K_1$ since $c_k(G) = P\left(n-1,k\right)/(2k)=c_k(K_{n-1})$.
	
	If $G$ has connectivity 1, then $G$ has at least two blocks, and every cycle lies entirely within some block. Let $b_1,\dots,b_t$ be the orders of the blocks of $G$, with $t\ge 2$ and each $b_i\ge 2$. Clearly, we obtain $\sum_i^t (b_i - 1) = n - 1\geq k.$
	Thus, by (i) of Lemma \ref{lemP3}, we have
	\[
	c_k(G) \leq \sum_{i=1}^t c_k(K_{b_i}) = \frac{1}{2k} \sum_{i=1}^t P(b_i, k) \leq \frac{P(n-1, k)}{2k}.
	\]
	If $c_k(G) = P\left(n-1,k\right)/(2k)$, then all the above inequalities become equalities. By (i) of Lemma \ref{lemP3}, the last equality holds if and only if $t=2$ and $2\in\{b_1,b_2\}$. Thus, $G$ is isomorphic to a spanning subgraph of $K_{n-1}\cdot K_2$, and furthermore, $G\cong K_{n-1}\cdot K_2$ since $c_k(G) = P\left(n-1,k\right)/(2k)=c_k(K_{n-1})$. 
	
	Combining the above arguments, we complete the proof of Case 1.
	
	\vspace{8pt}

{\noindent\bf Case 2.} $G$ is 2-connected.

\vspace{8pt}

If $n=4$, by Dirac's theorem, every $2$-connected graph on $4$ vertices is hamiltonian, a contradiction.
	
If $n=5$, then $\delta(G)\leq 2$; otherwise Dirac's theorem would imply that $G$ is hamiltonian. Since $G$ is $2$-connected, $\delta(G)=2$. Let $v$ be a vertex of degree $2$ with neighbors $u$ and $w$. Then $G-v$ is connected, and there is no Hamilton path from $u$ to $w$ in $G-v$ since $G$ is nonhamiltonian. After an easy check, $G\cong K_{2,3}$ or $G\cong K_2\vee 3K_1$. By direct verification, we have $c_{k}(G)\leq P(n-1,k)/(2k)$ with equality if and only if $k=4$ and $G$ is isomorphic to $K_{2,3}$ or $K_2\vee3K_1$. 

Now we assume that $n\geq 6$. Before proving, we need the following lemmas.

\begin{lemma}\label{lem2}
	Let $n\geq 6$, and let $G$ be an $n$-vertex nonhamiltonian 2-connected graph. Then $$c_{n-1}(G)< \frac{(n-2)!}{2}.$$
\end{lemma}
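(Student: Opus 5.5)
A natural plan is to count $(n-1)$-cycles through the vertices they miss. Every $(n-1)$-cycle of $G$ omits exactly one vertex $v$ and is a Hamilton cycle of $G-v$. Writing $h(H)$ for the number of Hamilton cycles of $H$, we get
$$c_{n-1}(G)=\sum_{v\in V(G)} h(G-v).$$
So it suffices to show $\sum_v h(G-v)<(n-2)!/2$. Note that $(n-2)!/2$ is exactly $h(K_{n-1})$, which is what $K_{n-1}\cup K_1$ attains. The task is therefore to show that a 2-connected nonhamiltonian $G$ cannot match even one full clique's worth of Hamilton cycles spread over all vertex-deletions.

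The key step is to relate Hamilton cycles of $G-v$ to Hamilton paths of $G$. Fix $v$ and a Hamilton cycle $C$ of $G-v$. Since $G$ is 2-connected, $\deg(v)\ge 2$. If $x\in N(v)$, then deleting either edge of $C$ at $x$ and appending $v$ at $x$ gives a Hamilton path of $G$ ending at $v$. From one cycle $C$ this produces $2\deg(v)\ge 4$ Hamilton paths of $G$, all with $v$ as an endpoint.

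Conversely, a Hamilton path of $G$ with $v$ as an endpoint has the form $v x\cdots y$. It arises from $C$ only as $C$ minus the edge $xz$, where $z$ is the other endpoint $y$. So the path determines $C$ (add the edge $xy$ and delete $v$). It may have arisen from the other endpoint's cycle, but each path has only two endpoints, so each path is counted at most twice overall. Summing over $v$ gives
$$\#\{\text{Hamilton paths of } G\}\ \ge\ \frac{1}{2}\sum_v 2\deg(v)\,h(G-v)\ \ge\ 2\sum_v h(G-v)=2c_{n-1}(G).$$
By Lemma \ref{LZ2}, with $n\ge 6$, the number of Hamilton paths of $G$ is at most $(n-2)!$, so $c_{n-1}(G)\le (n-2)!/2$.

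It remains to make the inequality strict. Equality would force the path count to be exactly $(n-2)!$, and then Lemma \ref{LZ2} forces $G\cong K_{n-1}\cdot K_2$, which is not 2-connected, a contradiction. This is the cleanest route to strictness.

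The main obstacle is justifying the double-counting carefully: the map from pairs (endpoint $v$, cycle $C$ of $G-v$, neighbor $x$ of $v$, edge of $C$ at $x$) to Hamilton paths must be at most 2-to-1. The argument above shows that a path with $v$ as an endpoint recovers $x$, the deleted edge, and $C$ uniquely. The only slack is which endpoint plays the role of $v$, giving at most 2-to-1.

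If the strictness via uniqueness in Lemma \ref{LZ2} seemed delicate, a fallback is available. Since $\deg(v)\ge 2$ for every $v$, and some $h(G-v)>0$ whenever $c_{n-1}(G)>0$, one can instead look for a vertex of degree at least 3 with $h(G-v)>0$ to gain a strict factor. If $c_{n-1}(G)=0$, the bound is trivial.
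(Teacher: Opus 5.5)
Your proof is correct and is essentially the paper's argument. Each $(n-1)$-cycle missing $v$ yields at least $4$ Hamilton paths of $G$ ending at $v$, and each such path arises from at most two (cycle, choice) pairs because the omitted vertex must be an endpoint. So $c_{n-1}(G)$ is at most half the number of Hamilton paths, and strictness follows from the uniqueness part of Lemma~\ref{LZ2}, since $K_{n-1}\cdot K_2$ is not 2-connected. The fallback in your last paragraph is not needed, and its claim about finding a vertex of degree at least $3$ is not justified, so you can drop it.
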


\begin{proof}
	Let $h(G)$ be the number of Hamilton paths of $G$. By Lemma \ref{LZ2}, since $G$ is nonhamiltonian, we have $h(G)\leq (n-2)!$ with equality if and only if $G\cong K_{n-1}\cdot K_2$. Moreover, since $G$ is 2-connected, $G\ncong K_{n-1}\cdot K_2$, and thus $h(G)<(n-2)!$.
	
	If $G$ has no $(n-1)$-cycle, the conclusion holds. Suppose now that $G$ has an $(n-1)$-cycle. Denote by $C$ an $(n-1)$-cycle of $G$, and let $\{v\} = V(G)\setminus V(C)$. Since $G$ is 2-connected, $\deg_G(v)\geq 2$. Let $u_1,u_2\in N_G(v)$. Clearly, $u_1$ and $u_2$ are not consecutive vertices in $C$ since $G$ is nonhamiltonian. 
	
	For $i\in\{1,2\}$, deleting one of the two edges incident to $u_i$ on $C$ and adding the edge $vu_i$ yields a Hamilton path. Thus each $(n-1)$-cycle produces at least 4 Hamilton paths.
	
	Conversely, each Hamilton path produced in this way can be obtained from at most two such $(n-1)$-cycles, since the vertex outside the cycle must be an endpoint of the path. Hence
	\[
	4c_{n-1}(G) \leq 2h(G),
	\]
	that is,
	\[
	c_{n-1}(G) \leq \frac{h(G)}{2} < \frac{(n-2)!}{2}.
	\] This proves the lemma.
\end{proof}

\begin{lemma}\label{lemsize}
	If $n\geq 6$, $H$ is an $n$-vertex nonhamiltonian 2-connected graph and $m=e(\overline{H})$, then $$m\geq \frac{(n-1)(3n-5)}{2(n+1)}$$ with equality only if $H\cong K_3\vee 4K_1$.
\end{lemma}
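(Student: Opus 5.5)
The plan is to read the bound off Lemma~\ref{Ko} (Kopylov--Luo), applied with $c=n$. Since $H$ is nonhamiltonian, its circumference is less than $n$. Since $n\ge 6\ge 5$ and $H$ is $2$-connected, Lemma~\ref{Ko} gives
\[
e(H)\le \max\{g(n,n,2),\,g(n,n,a)\},\qquad a=\left\lfloor \tfrac{n-1}{2}\right\rfloor,
\]
with equality only if $H\cong H_{n,n,2}$ or $H\cong H_{n,n,a}$. Hence
\[
m=\binom n2-e(H)\ge \min\Big\{\binom n2-g(n,n,2),\ \binom n2-g(n,n,a)\Big\}.
\]
It then suffices to compare each of the two quantities with $f(n):=\frac{(n-1)(3n-5)}{2(n+1)}$.

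For the first term, $g(n,n,2)=\binom{n-2}{2}+4$, so $\binom n2-g(n,n,2)=2n-7$. The inequality $2n-7> f(n)$ is equivalent to $2(2n-7)(n+1)>(n-1)(3n-5)$, that is, $n^2-2n-19>0$. This holds for all $n\ge 6$, so $H_{n,n,2}$ never attains equality.

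For the second term, $g(n,n,a)=\binom{n-a}{2}+a^2$, and I would split by parity.
\begin{itemize}
\item If $n=2a+1$, then $\binom n2-g=\binom{2a+1}{2}-\binom{a+1}{2}-a^2=\frac{a^2-a}{2}$, and $f(n)=\frac{a(3a-1)}{a+1}$. The inequality $\frac{a(a-1)}{2}\ge \frac{a(3a-1)}{a+1}$ reduces to $(a-1)(a+1)\ge 2(3a-1)$, i.e.\ $a^2-6a+1\ge 0$. This fails for $a\le 5$.
\item If $n=2a+2$, a similar polynomial comparison is needed.
\end{itemize}

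So the bare estimate $\binom n2-g(n,n,a)$ is too weak for small $n$; the comparison must be sharpened, and this is the main obstacle. Note that $a=\lfloor (n-1)/2\rfloor$ is a single fixed value. When the second quantity is smaller than $f(n)$, the maximum in Lemma~\ref{Ko} must actually be $g(n,n,2)$ for the conclusion to follow. So the right statement is
\[
m\ge \binom n2-\max\{g(n,n,2),g(n,n,a)\}.
\]
I would therefore compute which term realises the maximum for each $n$.

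For large $n$, $g(n,n,a)\approx 3n^2/8>g(n,n,2)\approx n^2/2-2n$? This is false, because $n^2/2$ dominates. So for large $n$ the maximum is $g(n,n,2)$, giving $m\ge 2n-7>f(n)$. The term $g(n,n,a)$ can be at least $g(n,n,2)$ only for small $n$; directly, $g(n,n,a)\ge g(n,n,2)$ exactly when $a^2-6a+1\ge$ a small quantity in the odd case. Concretely, I would tabulate $n=6,7,8,9,10,11$:
\begin{itemize}
\item For $n=7$: $a=3$, $g(7,7,3)=15$, $g(7,7,2)=14$, giving $m\ge 6=f(7)$, with equality only for $H_{7,7,3}=K_3\vee(K_1\cup 3K_1)=K_3\vee 4K_1$.
\item For the other small $n$: whenever $g(n,n,a)\ge g(n,n,2)$, I will check numerically that $\binom n2-g(n,n,a)>f(n)$. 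For example, $n=6$ gives $a=2$ and so only the $2n-7=5>f(6)$ case. For larger $n$ the maximum is $g(n,n,2)$.
\end{itemize}

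Finally, equality $m=f(n)$ forces $e(H)$ to equal the Kopylov maximum. By the equality case of Lemma~\ref{Ko}, together with the strict inequalities above for every other $n$ and for $H_{n,n,2}$, this leaves only $n=7$ and $H\cong K_3\vee 4K_1$.
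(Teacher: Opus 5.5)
Your overall strategy (Kopylov--Luo with $c=n$, then bound $\binom{n}{2}-\max\{g(n,n,2),g(n,n,a)\}$ from below) is the paper's. Your $n=7$ computation and the equality analysis built on it are correct. However, the general odd-case computation has an arithmetic error that creates an obstacle that does not exist. For $n=2a+1$,
\[
\binom{2a+1}{2}-\binom{a+1}{2}-a^2=\frac{a(a+1)}{2},
\]
not $\frac{a(a-1)}{2}$. Your own table entry $21-15=6$ for $n=7$ contradicts the value $3$ your formula gives.

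Taken at face value, your formula would put $\binom{n}{2}-g(n,n,a)$ below $\frac{(n-1)(3n-5)}{2(n+1)}$ at $n=7$ and $n=9$, which are exactly the cases where $g(n,n,a)$ is the larger term. Since $H_{n,n,a}$ is itself $2$-connected and nonhamiltonian, that would refute the lemma, and should have flagged the slip.

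With the correct value, the comparison is $(a+1)^2\ge 2(3a-1)$, i.e.\ $(a-1)(a-3)\ge 0$. This holds for all $a\ge 3$, with equality only at $a=3$. For $n=2a+2$ one gets $\binom{n}{2}-g(n,n,a)=\frac{a(a+3)}{2}$, and the comparison with $\frac{(2a+1)(6a+1)}{2(2a+3)}$ reduces to $2a^3-3a^2+a-1>0$, true for $a\ge 2$. Together with your correct bound $2n-7>\frac{(n-1)(3n-5)}{2(n+1)}$, this shows your first plan ("compare each of the two quantities") already works for every $n\ge 6$. Both terms of the minimum exceed the bound, strictly except the $a$-term at $n=7$.

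This corrected argument is a bit cleaner than the paper's. The paper must decide which term realizes the maximum: it asserts without proof that $g(n,n,2)\ge g(n,n,a)$ for $n\ge 10$, and computes $6\le n\le 9$ by hand.

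Your fallback, as written, is incomplete in three ways:
\begin{itemize}
\item The claim ``for larger $n$ the maximum is $g(n,n,2)$'' rests only on an asymptotic heuristic.
\item The criterion ``$a^2-6a+1\ge$ a small quantity'' is wrong.
\item The cases $n=8,\dots,11$ are promised rather than checked.
\end{itemize}
If you keep that route, the exact identities are
\[
g(n,n,2)-g(n,n,a)=\frac{(a-2)(a-5)}{2}\quad(n=2a+1),\qquad g(n,n,2)-g(n,n,a)=\frac{(a-2)(a-3)}{2}\quad(n=2a+2).
\]
So $g(n,n,a)$ strictly exceeds $g(n,n,2)$ only for $n\in\{7,9\}$, and for all other $n\ge 6$ you get $m\ge 2n-7$. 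With the corrected arithmetic above, however, you do not need this comparison at all.
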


\begin{proof}
	By Lemma \ref{Ko}, since $H$ is an $n$-vertex nonhamiltonian 2-connected graph, we obtain 	\begin{equation}\label{eqg}
	e(H)\leq \max\{ g(n,n,2),g(n,n,a)\}=\max\left\{ \binom{n-2}{2} + 4,\ \binom{n-a}{2} + a^2 \right\},
	\end{equation} where equality holds only if $H\cong H_{n,n,2}$ or $H\cong H_{n,n,a}$, where $a=\lfloor(n-1)/2\rfloor$. 
	
	If $n\geq 10$, then $\binom{n-2}{2} + 4\geq\binom{n-a}{2} + a^2$. Thus, by (\ref{eqg}), $m=\binom{n}{2}-e(H)\geq \binom{n}{2}-\left(\binom{n-2}{2}+4\right)=2n-7>\frac{(n-1)(3n-5)}{2(n+1)}$. 
	
	If $6\leq n\leq 9$, then by (\ref{eqg}) and a direct computation, we have $$\begin{aligned}m= \binom{n}{2}-e(H)&\geq \begin{cases}
		n-1, & \text{if } n=6,7\\
		n+1,	& \text{if } n=8,9
	\end{cases}\\&\geq \dfrac{(n-1)(3n-5)}{2(n+1)},\end{aligned}$$ where all equalities hold only if $n=7$ and $e(H)=\binom{4}{2}+3^2$ (i.e., $H\cong H_{7,7,3}\cong K_3\vee 4K_1$).
\end{proof}

\begin{lemma}\label{lemcycle}
	Let $n$ and $k$ be integers with $n\geq 5$ and $3\leq k\leq n$. \\
	{\rm (i)} For a given edge $uv$, there exist exactly $P(n-2,k-2)$ $k$-cycles containing $uv$ in $K_n$.\\	
	{\rm (ii)} For two given adjacent edges $uv$ and $vw$, there exist exactly $P(n-3,k-3)$ $k$-cycles containing $uv$ and $vw$ in $K_n$. \\
	{\rm (iii)} For two given nonadjacent edges $uv$ and $xy$, there exist exactly $2(k-3)P(n-4,k-4)$ $k$-cycles containing both $uv$ and $xy$ in $K_n$.
\end{lemma}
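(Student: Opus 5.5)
The plan is to count ordered $k$-cycles and then divide by the size of the symmetry group. A $k$-cycle in $K_n$ is determined by a cyclic sequence of $k$ distinct vertices, and each unordered cycle corresponds to exactly $2k$ sequences $(x_1,\dots,x_k)$: there are $k$ choices of starting point and $2$ choices of direction. To count cycles through prescribed edges, I will fix those edges in a normalized position and count the ways to fill in the remaining positions.

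For (i), I will count sequences $(x_1,\dots,x_k)$ with $x_1=u$ and $x_2=v$. The remaining $k-2$ positions are filled by an ordered choice of distinct vertices from the $n-2$ others, giving $P(n-2,k-2)$ sequences. Each $k$-cycle containing $uv$ corresponds to exactly one such sequence: rotating so that $u$ is first and choosing the direction so that $v$ comes next fixes both symmetries, because $uv$ occurs exactly once as an edge of the cycle.

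For (ii), I will normalize by $x_1=u$, $x_2=v$, $x_3=w$. This uniquely represents each cycle containing the path $uvw$: rotating so that $v$ is second and choosing the direction with $u$ before $v$ fixes everything. The remaining $k-3$ positions give $P(n-3,k-3)$ choices. For $k=3$ this is $P(n-3,0)=1$, which matches the single triangle $uvw$.

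For (iii), I will normalize by $x_1=u$, $x_2=v$, which uses both symmetries. The edge $xy$ must then appear as a consecutive pair $\{x_i,x_{i+1}\}$ with $3\le i\le k-1$ (indices taken modulo $k$, where $x_{k+1}=x_1$). The case $i=k$ is excluded because it would need $x$ or $y$ to equal $u$. This gives $k-3$ positions, and for each there are $2$ orientations ($x$ then $y$, or $y$ then $x$). The other $k-4$ positions are filled from the remaining $n-4$ vertices in $P(n-4,k-4)$ ways. The total is $2(k-3)P(n-4,k-4)$.

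The main point to check is the small cases. When $k=3$ the formula in (iii) gives $0$, which is correct since a triangle cannot contain two disjoint edges. When $k=4$ it gives $2$, and indeed $uvxy$ and $uvyx$ are the two $4$-cycles through both edges. Since $P(n-4,0)=1$ for $n\ge5$, the conventions for $P$ are consistent, and no counting step is in real doubt.
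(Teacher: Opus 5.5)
Your proof is correct and follows essentially the same route as the paper: fix $uv$ as the first two vertices, count orderings of the remaining positions, and in (iii) treat $xy$ as a two-vertex block with two orientations. Your count $2(k-3)P(n-4,k-4)$ (position of the block, orientation, ordered filling) is just a rearrangement of the paper's $2\binom{n-4}{k-4}(k-3)!$, and you also make explicit why the normalization $x_1=u$, $x_2=v$ picks out exactly one of the $2k$ sequences representing each cycle, which the paper leaves implicit.
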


\begin{proof}
	(i) Let $uv$ be an edge in $K_n$. To construct a $k$-cycle containing $uv$, we first choose and permute the other $k-2$ vertices $
	x_1, x_2, \dots, x_{k-2},$
	and then form a $k$-cycle.
	There are $P(n-2, k-2)$ ways to choose and order the $k-2$ vertices from the remaining $n-2$ vertices. Hence (i) holds.
	
	(ii) By an argument similar to that in (i), (ii) also holds.
	
	(iii) A triangle contains merely three edges, so no two disjoint edges can lie in a triangle when $k=3$. If $k=3$, the conclusion is trivial since $2(k-3)P(n-4,k-4)=0$. Now we may assume $k\geq4$. Let $uv$ and $xy$ be two nonadjacent edges, and fix the edge $uv$. We treat the edge $xy$ as an equivalent new vertex $z$. First, we select $k-4$ vertices from the remaining $n-4$ vertices of $K_n$ (excluding $u,v,x,y$), the number of ways is $\binom{n-4}{k-4}=\frac{(n-4)!}{(k-4)!(n-k)!}$.
	Combining vertex $z$ with the selected $k-4$ vertices yields $k-3$ elements, with $(k-3)!$ permutations.
	Moreover, the edge $xy$ has two possible orientations: $xy$ and $yx$, so we need to multiply the result by $2$. Therefore, the total number of distinct $k$-cycles in $K_n$ containing both nonadjacent
	edges $uv$ and $xy$ is $
	2(k-3)P(n-4,k-4).$
\end{proof}

\begin{lemma}\label{lemc5}
		$c_5(K_3\vee 4K_1)=36$.
\end{lemma}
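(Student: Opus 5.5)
The plan is to count directly, using the structure of $K_3\vee 4K_1$. Write $A=\{a_1,a_2,a_3\}$ for the vertices of the $K_3$ and $B=\{b_1,b_2,b_3,b_4\}$ for the four independent vertices. Every vertex of $A$ is adjacent to every other vertex, while $B$ is an independent set. The count rests on one structural observation: in a $5$-cycle $C$ of this graph, the vertices of $B$ lying on $C$ are pairwise non-consecutive on $C$.

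First I will pin down how many vertices of each part a $5$-cycle uses. An independent set of vertices of $C_5$ has at most $2$ elements, so $|V(C)\cap B|\le 2$. On the other hand, $|V(C)\cap A|\le 3$ and $|V(C)|=5$, which forces $|V(C)\cap B|\ge 2$. Hence every $5$-cycle uses exactly two vertices of $B$ and all three vertices of $A$. Moreover, since the two $B$-vertices are nonadjacent on $C$, the cyclic pattern must be $B,A,B,A,A$.

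Next I will parametrize such cycles uniquely. Let the two $B$-vertices be $b,b'$; these can be chosen in $\binom{4}{2}=6$ ways. In the pattern above, $b$ and $b'$ have exactly one common neighbour $x$ on $C$, and $x\in A$. The other two vertices $y,z\in A$ form the path $b'\,y\,z\,b$. So the cycle is $b\,x\,b'\,y\,z\,b$, and all of its edges are present in $K_3\vee 4K_1$.

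The parametrization does not overcount. Given the unordered pair $\{b,b'\}$, the vertex $x$ is determined by the cycle as the common neighbour. The ordered pair $(y,z)$ is also determined, with $y$ the $A$-neighbour of $b'$ other than $x$. Reversing the orientation of the cycle just swaps the roles of $b$ and $b'$, so it gives nothing new once the pair is taken unordered.

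Finally I count: there are $3$ choices for $x$ and $2$ orderings of $\{y,z\}$, and the two orderings give different edge sets ($b'y,zb$ versus $b'z,yb$). This gives $6$ cycles per pair and $6\cdot 6=36$ in total. The only delicate point is this check that each cycle is counted exactly once, that is, ruling out double counting from reflections and rotations. It is handled by the canonical choice of $x$ as the common neighbour of the two $B$-vertices.
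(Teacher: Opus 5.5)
Your proof is correct and follows essentially the same route as the paper. Both arguments show that every $5$-cycle uses all of $A$ and exactly two vertices of $B$, then multiply $\binom{4}{2}$ by the number of $5$-cycles on $A\cup\{b,b'\}$. The only difference is that you count those six cycles directly via the pattern $B,A,B,A,A$, while the paper gets $6=P(5,5)/10-P(3,3)=12-6$ by removing from the $5$-cycles of $K_5$ those through the missing edge $bb'$ (Lemma~\ref{lemcycle}(i)).
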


\begin{proof}
	Let $H=K_3\vee 4K_1$. Let $A=\{a_1,a_2,a_3\}$ and $B=\{b_1,b_2,b_3,b_4\}$ be the vertex sets of the $K_3$ and $4K_1$ parts of $H$, respectively. For any $5$-cycle $C$ of $H$, since $B$ is an independent set of $H$, we have $|V(C)\cap A|=3$ and $|V(C)\cap B|=2$. Choosing two vertices $b_i$ and $b_j$ from $B$ yields $\binom{4}{2}$ possible selections. Then $H_{ij}=H[\{b_i,b_j,a_1,a_2,a_3\}]$ is $K_5$ minus an edge, and thus $c_5(H_{ij})=P(5,5)/10-P(3,3)=6$ by (i) of Lemma \ref{lemcycle}. Therefore, $c_5(H)=6\binom{4}{2}=36$.
\end{proof}

\vspace{8pt}

 By Lemma \ref{lem2}, it suffices to consider the case in which $G$ is 2-connected, $n\geq 6$ and $3\le k\le n-2$. We aim to prove that $c_k(G) < P(n-1, k)/(2k).$

	Let $H$ be an $n$-vertex maximally nonhamiltonian graph that contains $G$ as a spanning subgraph.	By Theorem \ref{ta}, since $H$ is nonhamiltonian and 2-connected, we have $e(H)< (n^2-3n+4)/2$. Then
	\begin{equation}\label{eqm}
		m=e({\overline{H}})=\frac{n(n-1)}{2}-e(H)>n-2.
	\end{equation} 
	
	Let $\mathcal{C}_k$ denote the set of all $k$-cycles in $K_n$. For $j\geq 0$, we define
	$$y_j=|\{C\in \mathcal{C}_k: |E(C)\cap E({\overline{H}})|=j\}|.$$ Let $$\Omega = \{(C,e) \mid C \in \mathcal{C}_k,\ e \in E(C) \cap E({\overline{H}})\},$$ $$\mathcal{P} = \{(C, \{e,f\}) \mid C \in \mathcal{C}_k,\ e,f \in E(C) \cap E({\overline{H}}),\ e \neq f\}.	$$
	Counting $\Omega$ by first fixing the cycle $C$ gives $|\Omega| = \sum_{j \geq 1} j y_j$. Counting $\Omega$ by first fixing the edge $e \in E({\overline{H}})$ and then using (i) of Lemma \ref{lemcycle}, gives $|\Omega| = mP(n-2,k-2)=m(n-2)B$, where $B=P(n-3,k-3)$. Hence \begin{equation}\label{eq1}
		\sum_{j \geq 1} j y_j =m(n-2)B.
	\end{equation}

	Let $s$ denote the number of unordered pairs of adjacent edges in ${\overline{H}}$, and $q$ the number of unordered pairs of nonadjacent edges in ${\overline{H}}$. By (ii) and (iii) of Lemma \ref{lemcycle}, every pair of adjacent edges is contained in $B$ $k$-cycles, and every pair of nonadjacent edges is contained in $\rho B$ $k$-cycles, where $\rho=2(k-3)/(n-3)$. Similarly, by double counting $|\mathcal{P}|$, we obtain that
	\begin{equation}\label{eq2}
		|\mathcal{P}|=\sum_{j \geq 1} \binom{j}{2} y_j = sB + q\rho B.
	\end{equation}

   We distinguish two subcases according to whether $\rho>1$ or $0\le \rho\le 1$.

	{\noindent\bf Subcase 1.} $\rho>1$.
	
		By Lemma \ref{LZ1}, we have $\deg_{\overline{H}}(u)+\deg_{\overline{H}}(v)\geq n-1$ for every $uv\in E(\overline{H})$. Then we have
	$$
	\sum_{v\in V(\overline{H})} \deg_{\overline{H}}(v)^2
	=\sum_{uv\in E(\overline{H})}\big(\deg_{\overline{H}}(u)+\deg_{\overline{H}}(v)\big)
	\geq m(n-1).
	$$
	Thus, we obtain \begin{equation}\label{eq3}
		s=\sum_{v\in V(\overline{H})} \binom{\deg_{\overline{H}}(v)}{2}=\frac12\left(\sum_{v\in V(\overline{H})} \deg_{\overline{H}}(v)^2-2m\right)\geq \frac{m(n-3)}{2}.
	\end{equation}
	
	By (\ref{eq2}), (\ref{eq3}) and the fact $s+q=\binom{m}{2}$ and $\rho>1$, we have $$\sum_{j \geq 2} \binom{j}{2} y_j = sB + q\rho B=\left(\rho\binom{m}{2}-(\rho-1)s\right)B\leq \left(\rho\binom{m}{2}-(\rho-1)\frac{m(n-3)}{2}\right)B.$$ Combining (\ref{eq1}), we have 
	\begin{equation}
		\begin{aligned}
			\sum_{j\ge 1} j^2 y_j &= \sum_{j\ge 1} j y_j + 2\sum_{j\ge 2} \binom{j}{2} y_j \\
			&\le m(n-2)B + m\left[\rho(m-1) - (\rho-1)(n-3)\right]B= mLB,
		\end{aligned}
		\label{eq4}
	\end{equation}
	where $L=n-2+\rho(m-1)-(\rho-1)(n-3)=2n+1-2k+2(k-3)(m-1)/(n-3).$ Since $k\leq n-2$, this expression shows that $L>0$.
	
	By the Cauchy–Schwarz inequality, we obtain $$\left(\sum_{j\geq 1}jy_j\right)^2\leq \left(\sum_{j\geq 1}y_j\right)\left(\sum_{j\geq 1}j^2y_j\right).$$ Combining (\ref{eq1}) and (\ref{eq4}), we have \begin{equation}\label{eq6}
		\sum_{j\geq 1}y_j\geq \frac{(m(n-2)B)^2}{mLB}= \frac{m(n-2)^2}{L}B.
	\end{equation}
	
	We first show $\frac{m(n-2)^2 B}{L} \geq \frac{(n-1)(n-2)B}{2}$; that is, $2m(n-2)\geq (n-1)L$. It remains to verify $$f(n,m,k)=2m(n-2)(n-3)-(n-1)[(n-3)(2n+1-2k)+2(k-3)(m-1)]\geq 0.$$
	Since $k \leq n-2$, the coefficient of $m$ in $f(n,m,k)$ satisfies
	\[
	2\bigl((n-2)(n-3) - (n-1)(k-3)\bigr) \geq 2(n+1) > 0.
	\]
	Then $f(n,m,k)$ is increasing in $m$. Moreover, by (\ref{eqm}), we have $m \geq n-1$, so $f(n,m,k)$ is decreasing in $k$. Hence we have
	$f(n,m,k)\geq f(n,m,n-2) = 2(n+1)m - (n-1)(3n-5)\geq 0$ by Lemma \ref{lemsize}, where all equalities hold only if $k=n-2=5$ and $H\cong K_3\vee 4K_1$. Thus, by (\ref{eq6}), we have $$\sum_{j\geq 1}y_j\geq \frac{m(n-2)^2 B}{L} \geq \frac{(n-1)(n-2)B}{2}=\frac{P(n-1,k-1)}{2}.$$
	 
	By the above arguments, \begin{equation}\label{eq9}
	c_k(G)\leq c_k(H)= y_0=|\mathcal{C}_k|-\sum_{j\geq 1}y_j\leq \frac{P(n,k)}{2k}-\frac{P(n-1,k-1)}{2}=\frac{P(n-1,k)}{2k},
	\end{equation} where all equalities hold only if $f(n,m,k)=0$, which implies $k=5$ and $H\cong K_3\vee 4K_1$. If $c_k(G)=P(n-1,k)/(2k)$, then $k=5$ and $n=7$, so $\rho=2(k-3)/(n-3)=1$, contradicting $\rho>1$. Hence $c_k(G)<P(n-1,k)/(2k)$.
	
	{\noindent\bf Subcase 2.} $0\leq \rho  \leq 1$.
	
	In this case, we have $\rho B\leq B$. By (\ref{eq2}) and the fact $s+q=\binom{m}{2}$, we have \begin{equation}\label{eq10}
		\sum_{j \geq 1} \binom{j}{2} y_j = sB + q\rho B\leq (s+q)B=\binom{m}{2} B.
	\end{equation} Combining (\ref{eq1}) and (\ref{eq10}), we have 
	\begin{equation}
		\begin{aligned}
			\sum_{j\ge 1} j^2 y_j = \sum_{j\ge 1} j y_j + 2\sum_{j\ge 2} \binom{j}{2} y_j 
			\le m(n-2)B + 2\binom{m}{2} B= m(m+n-3)B,
		\end{aligned}
		\label{eq5}
	\end{equation} As in the derivation of (\ref{eq6}), by the Cauchy–Schwarz inequality, (\ref{eq1}) and (\ref{eq5}), we have \begin{equation}\label{eq7}
	\sum_{j\geq 1}y_j\geq \frac{(m(n-2)B)^2}{m(m+n-3)B}= \frac{m(n-2)^2}{m+n-3}B.
	\end{equation} By (\ref{eqm}), we have $m\geq n-1$, and since the function $x/(x+n-3)$ is increasing for $x>0$, we have \begin{equation}\label{eq8}
	\sum_{j\geq 1}y_j\geq \frac{m(n-2)^2}{m+n-3}B\geq \frac{(n-1)(n-2)^2}{2n-4}B= \frac{P(n-1,k-1)}{2}.
	\end{equation} Similar to (\ref{eq9}), we have $c_k(G)\leq P(n-1,k)/(2k)$, where equality holds only if the equalities in \eqref{eq10} and \eqref{eq8} both hold. 
	
	If $q=0$, then the edges in $\overline{H}$ are pairwise adjacent. Since $m\geq n-1\geq 5$ by (\ref{eqm}), $H$ cannot be a triangle; hence it is a star $K_{1,n-1}$, so $H=K_{n-1}\cup K_1$, contradicting the $2$-connectivity of $H$. Hence $q>0$, and equality in \eqref{eq10} forces $\rho=1$. 
	
	If $c_k(G)= P(n-1,k)/(2k)$, then by the above arguments, (\ref{eq10}) and (\ref{eq8}), we have $\rho=1$ and $m=n-1$. By Lemma \ref{lemsize}, we have $m=n-1\geq \frac{(n-1)(3n-5)}{2(n+1)}$ with equality only if $H\cong K_3\vee 4K_1$, and thus $n\leq 7$. Then by $n\geq 6$, we have $6\leq n\leq 7$. Since $\rho=1$, we have $(n,k)=(7,5)$. For $n=7$, $m=n-1$ equals the lower bound in Lemma \ref{lemsize}, so the equality case of that lemma
	gives $H\cong K_3\vee 4K_1$. However, by Lemma \ref{lemc5}, we have $c_5(G)\leq c_5(H)=36<72=P(6,5)/10$, a contradiction. Hence $c_k(G)<P(n-1,k)/(2k)$.
	
Combining the above arguments, we complete the proof of Case 2. Cases 1 and 2 together complete the proof of Theorem \ref{t1}. \hfill $\blacksquare$

\section*{Funding}
This work is supported by the National Natural Science Foundation of China (Grant Nos. 12371347, 12271337, 12401447).

\section*{Declaration}

\noindent\textbf{Conflict~of~interest}
The authors declare that they have no known competing financial interests or personal relationships that could have appeared to influence the work reported in this paper.

\noindent\textbf{Data~availability}
No data was used for the research described in the article.

\end{document}